\title{Ideal Independence, Free Sequences, and the Ultrafilter Number}
\author{Kevin Selker	}
\date{December 14, 2012}
\chardef\bslash=`\\ 
\newtheorem{thm}{Theorem}[section]
\newtheorem{cor}[thm]{Corollary}
\newtheorem{lem}[thm]{Lemma}
\newtheorem{prop}[thm]{Proposition}
\theoremstyle{definition}
\newtheorem{defn}{Definition}[section]
\theoremstyle{remark}
\newcommand{\lemref}[1]{Lemma~\ref{#1}}
\newcommand{\X}{\mathcal{X}}
\newcommand{\se}{\subseteq}
\newcommand{\sm}{\setminus}
\newcommand{\uu}{\mathfrak{u}}
\newcommand{\smm}{\rm{s}_{\rm{mm}}}
\newcommand{\ff}{\mathfrak{f}}
\newcommand{\D}{\mathcal{D}}
\newcommand{\E}{\mathcal{E}}
\newcommand{\0}{\emptyset}
\newcommand{\w}{\omega}
\newcommand{\sdiff}{\bigtriangleup}
\newcommand{\ang}[1]{\left\langle #1 \right\rangle}
\newcommand{\paren}[1]{\left( #1 \right)}
\newcommand{\eval}[2][\right]{\relax
  \ifx#1\right\relax \left.\fi#2#1\rvert}
\let\abs=\envert
\providecommand{\fsub}[1]{[\nobreak#1\nobreak ]^{<\omega}}
\providecommand{\set}[1]{{\left\{#1\right\}}}
\newcommand{\pset}[1]{\mathscr{P} {(#1)}}
\renewcommand{\int}{\rm{int}}
\newcommand{\be}{\begin{enumerate}}
\newcommand{\ee}{\end{enumerate}}
\newcommand{\bi}{\begin{itemize}}
\newcommand{\ei}{\end{itemize}}
\begin{document} 
\maketitle

\begin{abstract}We make use of a forcing technique for extending Boolean algebras.  The same type of forcing was employed in \cite{bk}, \cite{kos}, and elsewhere.  Using and modifying a lemma of Koszmider, and using CH, we obtain an atomless BA, $A$ such that $\frak{f}(A) = \text{s}_{mm}(A) <\frak{u}(A),$ answering questions raised by \cite{monki} and \cite{monkf}. 
\end{abstract}

\emph{Keywords:} free sequences, Boolean algebras, cardinal functions, ultrafilter number

\emph{Classification:} 06E05, 54A25\\

This paper is concerned with some ``small'' cardinal functions defined on Boolean algebras.  To describe the results we need the following definition.  For notation concerning Boolean algebras we follow \cite{handbookba}

\setcounter{section}{1}

\begin{defn}\be \item A subset $Y$ of a BA is  ideal-independent if $\forall y\in Y , y\not\in\ang{Y\sm\set{y}}^{\text{id}}$.

\item We define $\smm(A)$ to be the minimal size of an ideal-independent family of $A$ that is maximal with respect to inclusion. 

\item A free sequence  in a BA is a sequence $X = \set{ x_\alpha: \alpha<\gamma}$ such that whenever $F$ and $G$ are finite subsets of $\gamma$ such that $\forall i\in F\forall j\in G[ i<j]$, then   $$ \paren{\prod_{\alpha\in F}{x_\alpha} }\cdot \paren{\prod_{\beta\in G}-x_\beta} \neq 0.$$  Where empty products equal 1 by definition. 

\item We define $\frak{f}(A)$ to be the minimal size of a free sequence in $A$ that is maximal with respect to end-extension. 

\item We define $\frak{u}(A)$ to be the minimal size of a nonprincipal ultrafilter generating set of $A$. 

\item If $A$ is a Boolean algebra and $u$ is a nonprincipal ultrafilter on $A$, let  $P(A, u)$ be the partial order consisting of pairs $(p_0, p_1)$ where $p_0, p_1\in A\sm u$, and $p_0\cap p_1 = \0$, ordered by $(p_0, p_1)\leq (q_0, q_1)$ (``$(p_0, p_1)$ is stronger than  $(q_0, q_1)$'') iff $q_i\se p_i$ for $i = 0, 1$. 

\ee
\end{defn}

The main result of the paper is that under CH there is an atomless BA $A$ such that $\w = \ff(A) = \smm(A)<\uu(A)  = \w_1$. Theorem 2.10 in \cite{monki} asserts the existence of an atomless BA with $\smm(A)<\uu(A),$ but the proof is faulty.  The existence of an atomless BA $A$ with $\ff(A) <\uu(A)$ is a problem raised in \cite{monkf}.

From now on, unless specified otherwise, $A$ will denote a subalgebra of $\pset{\kappa}$ for $\kappa$ an infinite cardinal and $u$ will denote a nonprincipal ultrafilter on $A$. 

\begin{lem}{\label{ina}}Suppose that $G$ intersects every dense subset of $P(A, u)$.  Let $g = \bigcup_{(p_0, p_1)\in G} p_0$.  Let $e,f\in A$.  Suppose that for some $p\in G$ we have $e\sdiff f \se p_0\cup p_1$. Then the set $b: = (g\cap e) \cup (f\sm g)$ is a member of $A$. 
\end{lem}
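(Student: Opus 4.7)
The strategy is to compute $b$ piecewise on the partition $\kappa = (\kappa \sm (p_0 \cup p_1)) \cup p_0 \cup p_1$, using that $p_0 \cap p_1 = \0$ by the definition of $P(A,u)$. Outside $p_0 \cup p_1$, the hypothesis $e \sdiff f \se p_0 \cup p_1$ gives $e = f$, so $(g \cap e) \cup (f \sm g)$ collapses to $f$ there, which is already a member of $A$.

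Inside $p_0$ and $p_1$ I need to identify $g \cap p_0$ and $g \cap p_1$, even though $g$ itself need not lie in $A$. Since $(p_0, p_1) \in G$, the set $p_0$ appears as one of the terms in the union defining $g$, so $p_0 \se g$ and hence $g \cap p_0 = p_0$. For $g \cap p_1$, I will use that $G$ is a filter, hence directed: given any $(q_0, q_1) \in G$, choose a common extension $(r_0, r_1) \in G$ below both $(p_0, p_1)$ and $(q_0, q_1)$; then $q_0 \se r_0$, $p_1 \se r_1$, and $r_0 \cap r_1 = \0$, so $q_0 \cap p_1 = \0$. Taking the union over all $(q_0, q_1) \in G$ yields $g \cap p_1 = \0$.

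Plugging these in, $b \cap p_0 = (e \cap p_0) \cup \0 = e \cap p_0$ and $b \cap p_1 = \0 \cup (f \cap p_1) = f \cap p_1$. Combined with the computation outside $p_0 \cup p_1$, this produces the closed form
\[ b = (f \sm (p_0 \cup p_1)) \cup (e \cap p_0) \cup (f \cap p_1), \]
which is an explicit Boolean combination of members of $A$ and therefore lies in $A$.

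The main subtlety is recognizing that although the fragmentary object $g$ may itself fail to belong to $A$, its restrictions to $p_0$ and to $p_1$ are forced by the very structure of $P(A,u)$ to equal $p_0$ and $\0$, respectively; once this is in hand, the rest is a routine case analysis enabled by $e$ and $f$ agreeing off of $p_0 \cup p_1$. Notice that full genericity of $G$ is not used in the argument; only that $G$ is a filter containing the specific condition $(p_0, p_1)$.
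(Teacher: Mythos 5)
Your proof is correct, and it differs from the paper's only in the choice of decomposition. The paper splits $b$ along $e\cap f$, $e\sm f$, $f\sm e$, proving the two localized identities $g\cap(e\sm f)=p_0\cap(e\sm f)$ and $(-g)\cap(f\sm e)=p_1\cap(f\sm e)$, which yield $b=(e\cap f)\cup\bigl(p_0\cap(e\sm f)\bigr)\cup\bigl(p_1\cap(f\sm e)\bigr)$. You instead partition $\kappa$ by the condition itself, into $p_0$, $p_1$, and $\kappa\sm(p_0\cup p_1)$, and use the two global facts $p_0\se g$ and $g\cap p_1=\0$ (the latter exactly by the directedness argument the paper also uses: a common extension $r\le p,q$ in $G$ gives $q_0\cap p_1\se r_0\cap r_1=\0$), together with $e=f$ off $p_0\cup p_1$, arriving at $b=(e\cap p_0)\cup(f\cap p_1)\cup\bigl(f\sm(p_0\cup p_1)\bigr)$; under the hypothesis $e\sdiff f\se p_0\cup p_1$ this is the same set as the paper's expression, and either way $b$ is a Boolean combination of elements of $A$. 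The ingredients are identical, but your bookkeeping is arguably cleaner, since $g\cap p_1=\0$ is established once rather than localized to $f\sm e$, and your closing remark is a fair one: neither your argument nor the paper's uses genericity of $G$, only that $G$ is directed (a filter) and contains the particular condition $p$ -- a hypothesis the lemma's statement leaves implicit but which holds in every application in the paper.
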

\begin{proof}  First we claim $g\cap (e\sm f) = p_0\cap (e\sm f)$.   In fact, $\supseteq $ is clear. Now suppose that $q\in G$; we want to show that $q_0 \cap (e\sm f)\se p_0\cap(e\sm f)$.  Choose $r\in G$ such that $r\leq q, p$. Note that $r_0\cap p_1\se r_0\cap r_1 = 0.$ Hence

$$ q_0\cap (e\sm f ) \se r_0 \cap  (p_0\cup p_1 ) \cap (e\sm f ) \se p_0 \cap (e\sm f ).  $$ Second we claim  $(-g) \cap (f\sm e) = (p_1\cap (f\sm e))$.

For $(\supseteq)$ suppose that $q \in G$; we want to
show that $q_0 \cap p_1 \cap(f \sm e) = 0.$ Choose $r \in G $ such that $r \leq p, q.$ Then $r_0 \cap p_1 \se r_0 \cap r_1 = 0$,
so $q_0 \cap p_1 \cap (f \sm e) \se r_0 \cap p_1 = 0.$

For $(\se)$, we have $f \sm e \se p_0 \cup p_1 ,$ so $-p_1 \se p_0 \cup -(f \sm e) \se g \cup -(f \sm e)$, hence $-g \cap (f \sm e) \se p_1$.

 Now write \begin{align*} b &  = (b\cap (e\cap f))\cup (b\cap (-(e\cap f))) \\ 
& = ((g\cap e \cup (-g)\cap f) \cap (e\cap f)) \cup ((g\cap e \cup (-g)\cap f) \cap (-(e\cap f)))\\
& = g\cap (e\cap f) \cup (-g) \cap (e\cap f) \cup (g\cap (e\sm f) \cup (-g)\cap (f\sm e))\\ 
& = (e\cap f ) \cup ((p_0\cap (e\sm f)) \cup (p_1\cap (f\sm e))).
\end{align*} The last line is a member of $A$. \end{proof}

We now use a version of Proposition 3.6  from \cite{kos}.  We include a detailed proof here for completeness. 

\begin{lem}  Let $\X$ be maximal ideal-independent in $A$. Suppose that $A, \X,$ and $P(A, u)$ are all subsets of a c.t.m. $M$ of ZFC.  Suppose that $G$ is $P(A, u)$-generic over $M$, and let $g = \bigcup \set{ p_0 : (p_0, p_1)\in G}$.  Then $\X$ is still maximal ideal independent in $\ang{A\cup\set{ g}}$ (as viewed in $M[G]$). 
\end{lem}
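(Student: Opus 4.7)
The plan is to verify two things: preservation of ideal-independence of $\X$ in $\langle A \cup \{g\}\rangle$, and then maximality there. Preservation is essentially automatic: whether $y$ lies in $\langle \X \sm \{y\}\rangle^{\text{id}}$ is the question of whether $y \le \bigvee_i x_i$ for some finite tuple from $\X \sm \{y\}$, and since both sides live in $A$, this relation is absolute between $A$ and $\langle A \cup \{g\}\rangle$. Hence no new dependencies arise.

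For maximality, take $b \in \langle A \cup \{g\}\rangle \sm \X$ and write $b = (g \cap e) \cup (f \sm g)$ for some $e, f \in A$. The goal is to show $\X \cup \{b\}$ is not ideal-independent: either $b \se \bigcup F$ for some finite $F \se \X$, or $y \se b \cup \bigcup F$ for some $y \in \X$ and finite $F \se \X \sm \{y\}$. The first step is a density claim: for any $e, f \in A$ with $e \sdiff f \notin u$, the set $D_{e,f} := \{(p_0, p_1) \in P(A, u) : e \sdiff f \se p_0 \cup p_1\}$ is dense in $P(A, u)$, shown by extending $(q_0, q_1)$ to $(q_0 \cup ((e \sdiff f) \sm q_1), q_1)$ and noting that the complement of the ultrafilter $u$ is closed under finite unions, so the enlarged first coordinate stays out of $u$. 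When $e \sdiff f \notin u$, genericity supplies some $p \in G \cap D_{e,f}$, so \lemref{ina} places $b$ in $A$; since $b \notin \X$, maximality of $\X$ in $A$ finishes this case.

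The main obstacle is the case $e \sdiff f \in u$, in which $b \notin A$ (the ``in $u$'' status of $e$ and $f$ is preserved under admissible modifications of the representation $b = (g \cap e') \cup (f' \sm g)$, so the symmetric difference cannot be shrunk below $u$). Here I would apply maximality of $\X$ in $A$ to the lower $A$-approximation $\tilde b_p := (p_0 \cap e) \cup (p_1 \cap f) \cup (e \cap f)$, which satisfies $\tilde b_p \se b$ for every $p \in G$ (using $p_0 \se g$, $p_1 \se -g$, and $e \cap f \se b$). If for some $p \in G$ maximality yields $y \in \X$ with $y \se \tilde b_p \cup \bigcup F$, then $y \se b \cup \bigcup F$ and we are done. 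The hard part is the dual subcase, where $\tilde b_p \se \bigcup F_p$ for every $p \in G$: here the $F_p$'s must be combined into a single finite $F$ covering $b$. I expect this to require a further density argument on $P(A, u)$, considering the upper approximation $\hat b_p := (p_0 \cap e) \cup (p_1 \cap f) \cup (-(p_0 \cup p_1) \cap (e \cup f)) \supseteq b$ to which maximality can also be applied, together with atomlessness of $A$ implicit from the paper's setting. The careful matching of witnesses across conditions is the technically delicate step.
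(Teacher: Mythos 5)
Your easy case is fine and matches the paper: when $e\sdiff f\notin u$, the set of conditions with $e\sdiff f\se p_0\cup p_1$ is dense (your extension argument is the paper's $(*)$), and then \lemref{ina} puts $b$ into $A$ and maximality of $\X$ in $A$ finishes. Your preservation remark is also correct but essentially vacuous, since the relevant inclusions are among fixed subsets of $\kappa$ and cannot be created by enlarging the algebra. The problem is that the case $e\sdiff f\in u$ is exactly the content of the lemma, and you have not proved it: you introduce the lower and upper approximations $\tilde b_p\se b\se \hat b_p$ (which are correct), but in the subcase where maximality applied to $\tilde b_p$ only returns a cover $\tilde b_p\se\bigcup F_p$ (and maximality applied to $\hat b_p$ only returns some $y\se\hat b_p\cup\bigcup F'_p$), you say you ``expect this to require a further density argument'' and stop. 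That missing density argument is the heart of the proof, not a technical detail to be matched later; nothing you have written forces the error region $a_p\cap(e\sdiff f)$ between the two approximations to be controlled, and without that no single finite family from $\X$ witnessing non-ideal-independence of $\X\cup\set{b}$ is obtained. (Your side claim that $b\notin A$ whenever $e\sdiff f\in u$ is also unjustified, though it is not load-bearing.)

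For comparison, the paper resolves precisely this point by building, in $M$, a single dense set $D_{e,f}$ whose members satisfy one of three alternatives: (1) $e\sdiff f\se p_0\cup p_1$; (2) some $x_0\in\X$ satisfies $x_0\se p^*\cup x_1\cup\dots\cup x_n$ where $p^*=(e\cap p_0)\cup(f\cap p_1)\se b$; (3) $p^*\cup a_p\se x_0\cup\dots\cup x_n$, which covers $b$ because $b\se p^*\cup a_p$. Density in the hard case (say $e_p\sm f_p\in u$) is proved by first extending $p$ to $q$ with $a_q\se e\sm f$, applying maximality of $\X$ to $q^*\in A$, and, in the problematic branch $q^*\se x_0\cup\dots\cup x_n$, applying maximality a second time to $a_q\in A$; the remaining branch $y_0\se y_1\cup\dots\cup y_m\cup a_q$ is handled by a further extension of the condition, splitting on whether $a_q\cap y_0\in u$, so as to absorb either $a_q\sm y_0$ into the second coordinate (yielding alternative (3)) or $a_q\cap y_0$ into the first coordinate (yielding alternative (2)). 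This two-fold use of maximality plus the $u$-splitting extension is the idea your proposal is missing, so as it stands the proof is incomplete.
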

\begin{proof}  Let $e, f\in A$.  For any $p\in P(A, u)$ we define $p^* = (e\cap p_0) \cup (f\cap p_1)$, $a_p = \kappa \sm (p_0\cup p_1)$,  $e_p =a_p \cap e,$ and $f_p = a_p\cap f$.  We say that an element $\alpha\in \kappa$ is \emph{permitted by }$p$ iff there is a $q\leq p$ with $\alpha\in q^*$. We define a subset $D_{e,f}$ of $P(A, u)$ in $M$ as follows. 

$p\in D_{e,f}$ iff one of the following conditions holds: \begin{enumerate}
\item $p_0\cup p_1 \supseteq e\sdiff f $
\item  $\exists n\in \w \,\exists x_0, \ldots, x_n\in \X\, \left[ x_0\se p^*\cup x_1\cup \ldots \cup x_n\right]$
\item $\exists n\in \w \,\exists x_0, \ldots, x_n\in \X\, \left[ p^*\cup  a_p\se x_0\cup \ldots \cup x_n\right]$

%
%
\end{enumerate}

We claim \begin{equation*}\tag{$*$} \text{If } p\in P(A, u)\text{ and }x\not\in u, \text{ then there is a } q\leq p \text{ such that } x\se q_0\cup q_1
\end{equation*} In fact, let $q_0 = p_0\cup (-p_1\cap x)$ and let $q_1 = p_1\cup (-q_0\cap x)$. 

First we claim that $D_{e,f}$ is dense in $P(A, u)$.  So let $p\in P(A, u)$. One of the following holds 
\begin{enumerate}[(i)]  \item $e_p\cap f_p\in u$
\item$ \kappa\sm(e_p\cup f_p) \in u$
\item $e_p\sm f_p\in u$
\item $f_p\sm e_p\in u$. 
\end{enumerate}

Note that $e_p\sm f_p = a_p \cap (e\sm f)$, $f_p\sm e_p = a_p \cap(f\sm e), $ and $e_p\sdiff f_p = a_p\cap (e\sdiff f)$.  If (i) or (ii) is the case, then $e_p\sdiff f_p\not\in u$, so also $e\sdiff f\not \in u$ (as $p_0\cup p_1\not\in u$). By $(*)$ there is a $q\leq p $ such that $q_0\cup q_1 \supseteq e\sdiff f$, so that (1) of the definition of $D_{e,f}$ is satisfied. 

Next, suppose that (iii) is the case.  Then also $e\sm f\in u$; by $(*)$  there is a $q\leq p$ such that $-(e\sm f)\se q_0\cup q_1$,  so that $a_q\se e\sm f$.  Now by maximality of $\X$ in $A$ we have that for some $n\in \w$ and some  $x_0, \ldots, x_n\in X$,    \be[(i)]\setcounter{enumi}{4}
\item $x_0\se q^*\cup x_1\cup \ldots \cup x_{n}$, or 
\item $q^* \se x_0\cup \ldots \cup x_n.$
\ee
If (v)  is the case, then condition (2)  in the definition of $D_{e,f}$ is satisfied.  So suppose that (vi) is  the case.  Again, by maximality of $\X$ in $A$, there is an $m\in \w$ and some $y_0, \ldots, y_m\in \X$ such that either :\be[(i)]\setcounter{enumi}{6}\item $a_q \se y_0\cup \cdots \cup y_m$, or 
\item $y_0\se y_1\cup \ldots \cup y_m\cup a_q$.
\ee If (vii) holds then  $q^*\cup a_q \se x_0\cup \ldots \cup x_n \cup y_0\cup \ldots\cup y_m$, so condition (3) of the definition of $D_{e,f}$ is satisfied.  Suppose then that (viii) holds. 
\bi \item  Case 1. $a_q\cap y_0\in u$. Then $a_q\sm y_0\not\in u$.  Let $r_0 = q_0$ and $r_1  = q_1\cup (a_q\sm y_0)$.      We claim that $r^*\cup a_r\se y_0\cup x_0\cup \ldots\cup x_n$, so  $r$ satisfies (3) in the definition of $ D_{e,f}$. In fact, $a_r = a_q\cap y_0\se y_0.$ Now recall $r^* = (e\cap r_0)\cup (f\cap r_1)$.    Note that   $r_0\sm q_0 = \0 $ and $r_1\sm q_1\se a_q.$  In particular, since $a_q\se e\sm f$, $f\cap r_1 = f\cap q_1$. Hence $r^* = q^*$, and by (vi) $q^*\se x_0\cup \ldots\cup x_n$. So $r$ satisfies condition (3) of $D_{e,f}$.


\item Case 2. $a_q\cap y_0\not\in u$. Then let $r_0 = q_0 \cup (a_q\cap y_0)$ and let $r_1 = q_1$. Now using (viii) we have that $y_0\se y_1 \cup \ldots \cup y_m \cup (a_q\cap y_0)$.  Also $a_q\cap y_0\se a_q \se e$, so $a_q\cap y_0\se r^*$. Thus we have $y_0\se y_1\cup \ldots \cup y_m \cup r^*$.  So condition (2) in the definition of $D_{e,f}$. is satisfied. 
\ei

The case when $f_p\sm e_p\in u$ is treated similarly.  Thus we have proved that the sets $D_{e,f}$ are indeed dense.

Now suppose that $b\in \ang{A\cup \set{ g}}$, we will show that $\X\cup\set{ b} $ is not ideal independent.  Write $b = (e\cap g) \cup (f\cap (-g))$ for some $e, f\in A$.  Now let $p\in D_{e,f}$ be such  that $p\in G$.  Note that $p_0\se g$. Also  $p_1\se (-g)$, for, suppose that $q\in G$; we want to show that $p_1\cap q_0 = 0$. Choose $r\in G$ such that $r\leq p,q$. Then $p_1\cap q_0\se r_1\cap r_0 = 0$.  So $p^*\se b$.  We consider cases according to the definition of $D_{e,f}$. 
\bi\item Case 1. $p_0\cup p_1\supseteq e\sdiff f$.  Then \lemref{ina} gives that $b\in A$, so $\X\cup \set{ b}$ is not independent by maximality of $\X$ in $A$.

\item Case 2.   $\exists n\in \w \,\exists x_0, \ldots, x_n\in \X\, \left[ x_0\se p^*\cup x_1\cup \ldots \cup x_n\right]$. Then $x_0\se b \cup x_1\cup \ldots \cup x_n$. 

\item Case 3. $\exists n\in \w \,\exists x_0, \ldots, x_n\in \X\, \left[ p^* \cup a_p\se x_0\cup \ldots \cup x_n\right]$. Cleary $b\cap (p_0\cup p_1) = p^*,$ so $b\se p^*\cup a_p$. So also $b\se x_0\cup \ldots \cup x_n$. 
\qedhere \ei\end{proof}

\begin{prop}[CH] Assuming the continuum hypothesis, there is an atomless BA $B$ such that $\smm(B) = \w$ and $\uu(B) = \w_1$.    
\end{prop}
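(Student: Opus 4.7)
The plan is to build $B$ under CH by transfinite recursion of length $\w_1$, producing an increasing chain $\ang{A_\alpha : \alpha < \w_1}$ of countable atomless Boolean subalgebras of $\pset{\w_1}$, where at each successor stage I use the forcing $P(A_\alpha, u_\alpha)$ to kill a candidate countable ultrafilter base on the eventual $B = \bigcup_{\alpha < \w_1} A_\alpha$, while a single fixed countable maximal ideal-independent $\X \se A_0$ is preserved throughout. I would start with any countable atomless $A_0 \se \pset{\w_1}$ containing a chosen countable maximal ideal-independent $\X$, and, using CH, enumerate $(\MS_\alpha : \alpha < \w_1)$ by a standard bookkeeping so that every countable subset of the eventual $B$ appears as some $\MS_\alpha$ with $\MS_\alpha \se A_\alpha$.

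At limits I would take unions; maximality of $\X$ passes through unions because each witness of non-ideal-independence lives in a finite stage. At a successor $\alpha+1$: if $\MS_\alpha$ is not an ultrafilter base on $A_\alpha$, set $A_{\alpha+1} = A_\alpha$. Otherwise let $u_\alpha$ be the generated nonprincipal ultrafilter on $A_\alpha$, choose a countable transitive model $M$ of a large enough fragment of ZFC with $A_\alpha, \X, u_\alpha, P(A_\alpha, u_\alpha) \in M$, take a $P(A_\alpha, u_\alpha)$-generic $G$ over $M$, put $g_\alpha = \bigcup\set{p_0 : (p_0, p_1) \in G}$, and declare $A_{\alpha+1} = \ang{A_\alpha \cup \set{g_\alpha}}$ as computed in $\pset{\w_1}$. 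The preceding lemma then guarantees that $\X$ is still maximal ideal-independent in $A_{\alpha+1}$, so by induction and the limit case $\X$ is maximal ideal-independent in $B$, yielding $\smm(B) \le \w$.

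The critical additional verification is that $g_\alpha$ actually destroys $u_\alpha$. I would show that for each $z \in u_\alpha$ the set
\[
D_z = \set{p \in P(A_\alpha, u_\alpha) : p_0 \cap z \ne 0 \text{ and } p_1 \cap z \ne 0}
\]
lies in $M$ and is dense in $P(A_\alpha, u_\alpha)$: given $p$, the element $w := z \sm (p_0 \cup p_1)$ lies in $u_\alpha$, and atomlessness of $A_\alpha$ lets me carve out of $w$ two disjoint nonempty members of $A_\alpha \sm u_\alpha$, which I adjoin to $p_0$ and $p_1$ to obtain $q \in D_z$ with $q \le p$. Hitting each $D_z$ then forces $z \cap g_\alpha \ne 0$ and $z \cap (-g_\alpha) \ne 0$ for every $z \in u_\alpha$. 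Since finite meets of elements from $\MS_\alpha$ lie in $A_\alpha$ and belong to $u_\alpha$, neither $g_\alpha$ nor $-g_\alpha$ is in the filter on $A_{\alpha+1}$ generated by $\MS_\alpha$; and this failure is preserved when passing to any later $A_\beta$, hence to $B$. By the bookkeeping, no countable subset of $B$ generates a nonprincipal ultrafilter on $B$, so $\uu(B) \ge \w_1$; with $|B| = \w_1$ from CH we get $\uu(B) = \w_1$.

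Atomlessness of each $A_{\alpha+1}$ I would preserve by interleaving into the bookkeeping analogous splitting tasks, using dense subsets of $P(A_\alpha, u_\alpha)$ (again relying on atomlessness of $A_\alpha$) that force $g_\alpha$ to properly split any prescribed candidate atom. The main technical obstacle is precisely the triple role of the single generic $g_\alpha$---preserving $\X$'s maximality through Lemma~2.2's dense sets $D_{e,f}$, killing $u_\alpha$ through the $D_z$ above, and splitting potential atoms---and the fact that each of these density arguments rests on atomlessness of $A_\alpha$ to provide enough room for refinement inside the two-sided conditions $(p_0, p_1)$.
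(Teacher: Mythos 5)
Your overall strategy is the paper's: an $\w_1$-chain of countable atomless algebras, successor stages adjoining a generic $g_\alpha$ for $P(A_\alpha,u_\alpha)$, the preservation lemma keeping a fixed maximal ideal-independent $\X\se A_0$ maximal, and CH bookkeeping; your dense sets $D_z$ for $z\in u_\alpha$ are a correct and in fact somewhat more direct substitute for the paper's sets $D_a$ ($a\notin u$) and $E_i$, since they show at once that no member of $u_\alpha$ lies below $g_\alpha$ or below $-g_\alpha$, which is exactly what makes the failure persist in $B$. But one step fails as written: you put the algebras inside $\pset{\w_1}$ and then ``choose a countable transitive model $M$ with $A_\alpha,\X,u_\alpha,P(A_\alpha,u_\alpha)\in M$''. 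No such $M$ exists: transitivity plus countability of $M$ forces every element of $A_\alpha$ to be a countable set, whereas the unit $\w_1\in A_\alpha$, and typical members of $u_\alpha$, are uncountable. The repair is easy and is what the paper does: either work in $\pset{\w}$ (nothing in your argument uses $\w_1$ as the underlying set), or, better, drop countable transitive models entirely and note that the proof of the preservation lemma only needs a filter $G$ meeting the countably many dense sets $D_{e,f}$, together with your $D_z$ and the splitting sets below; such a filter exists by the Rasiowa--Sikorski argument, and genuine genericity is never used.

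The second soft spot is atomlessness of $A_{\alpha+1}$, which you need both to keep the recursion going (your density argument for $D_z$ uses atomlessness of $A_\alpha$, and nonprincipality of all ultrafilters is automatic only on atomless algebras) and for the final conclusion, but which you only assert can be arranged by ``analogous splitting tasks''. The elements that must be split are the new ones: any atom of $\ang{A_\alpha\cup\set{g_\alpha}}$ has the form $g_\alpha\cap e$ or $-g_\alpha\cap f$ with $e,f\in A_\alpha$ (a general element is the disjoint union of two such pieces), so the splitting sets must be indexed by elements of $A_\alpha$ and must guarantee, for each candidate, either that it lies in $A_\alpha$ (e.g.\ by covering the relevant symmetric difference, as in \lemref{ina}) or that some $d\in A_\alpha$ gives $g_\alpha\cap d$ strictly between $0$ and it; a $D_z$-style density argument does deliver this, but it is a genuine piece of the proof, and the paper obtains it from its sets $D_a$ with $a\notin u$, which your proposal discarded. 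With the ambient set changed to $\w$ (or the c.t.m.\ replaced by the countable family of dense sets) and the splitting sets actually written down, your argument goes through and is essentially the published one; your bookkeeping by countable subsets instead of ultrafilters of the limit algebras is an unproblematic variant under CH.
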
 
\begin{proof}  First we make some definitions.   Suppose that $A$ is a countable atomless subalgebra of $\pset{\w}$ and $u$ is an ultrafilter of $A$.  All ultrafilters are assumed to be nonprincipal.  Let $P(A, u)$ be defined as above. For each $a\not\in u$, define $$D_a = \Big\{ (p_0, p_1)\in P(A, u): a \se (p_0\cup p_1), p_0\sm a\not= \0 \not= p_1\sm a \Big\}. $$  We claim that each $D_a$ is dense in $P(A, u)$.  If $p = (p_0, p_1)\in P(A, u)$, then we have that $b: = p_0\cup p_1\cup a\not\in u$.  Because $A$ is atomless, there are disjoint  $x_0, x_1\se \w\sm b$ such that each $x_i\not\in u$.  Define $q_0 = p_0\cup x_0$ and $q_1 = p_1\cup x_1\cup (a\sm p_0)$. We have $q_0\sm a \not = 0$ since $x_0\se \w\sm a$, hence $x_0 = x_0\sm a \se q_0\sm a$.  Similarly $q_1\sm a \neq 0$.  So $(q_0, q_1)$ is an extension of $p$ in $D_a$.  

Next, for $i\in \w$ we define $$E_i = \set{ (p_0, p_1)\in P(A, u): i\in p_0\cup p_1}.$$ Since $u$ is nonprincipal, $\set{ i}$ is not a member of $u$ for any $i\in \w$.  Thus if $p = (p_0, p_1)\not\in E_i$ then $(p_0\cup\set{i}, p_1)$ is an extension of $p$ that is a member of $E_i$.

For $e,f\in A$ let $D_{e,f}$  be as in the above lemma.  Let  $$\D = \set{ D_a : a\not\in u}\cup \set{ D_{e,f}: e,f\in A}\cup\set{ E_i: i\in\w}.$$ Since $\D$ is a countable collection of dense sets, there is a filter $G\se P(A, u)$ such that $G$ has nonempty intersection with every member of $\D$. Given such a $G$ we will say that $x$ is \emph{generic for} $P(A, u)$  iff  $x = \bigcup_{(p_0, p_1)\in G} p_0$. By the proof of the above lemma, such a generic $x$ does not extend any maximal ideal-independent set of $A$.  We shall show, however, that  $x\not\in A,$ and $u$ does not generate an ultrafilter in $\ang{A\cup\set{x}}$.  First, suppose for a contradiction that  $x\in A$. Then either $x\in u$ or $-x\in u$.  If $-x\in u$ then $D_x\in \D$, so choose $p= (p_0, p_1)\in D_x\cap G$. By definition of $x$ we have $p_0\se x$.  But $p\in D_x$, so also $p_0\sm x\not=\0$, a contradiction.  We reach a contradiction similarly if $x\in u$. In fact, the same argument works
since if $p\in D_{-x} \cap G$ then $p_1\se -x$.  For, if $q\in G$, choose $r\in G$ with $r\leq p, q$. Then
$q_0\cap p_1\se r_o\cap r_1 = \0$.  So $p_1\cap q_0 = \0$. Hence $p_1\cap x = \0$.

 Next, suppose that $u$ were to generate an ultrafilter in $\ang{A\cup\set{x}}$.  So there is an $a\in A\sm u$ such that either $x\leq a$ or $-x\leq a$.  If $x\leq a$ then consider $(p_0, p_1)\in G\cap D_a$.  We claim that $x = x\cap a = p_0\cap a \in A$, a contradiction.  In fact, clearly $x\cap a \supseteq p_0\cap a$.  For the other inclusion, consider an arbitrary $q \in G$ and let $r\in G$ be such that $r\leq q, p$. Then since $p\in D_a$ we get $q_0\cap a \se r_0\cap (p_0\cup p_1)\cap a\se p_0$, since $r_0\cap r_1 = 0$ and $p_1\se r_1$.  Thus $x\cap a\se p_0\cap a$.    To carry out a symmetrical argument in case  $-x\leq a$ we just need to see that $-x = \bigcup_{(p_0,p_1)\in G} p_1$.   For $(\se)$, suppose that $i\in -x$.  Let $p\in G\cap E_i$.  So $i\in p_0\cup p_1$.   We must have $i\not\in p_0$ or else $i\in x$, so $i\in p_1$.  For the opposite inclusion, suppose that $p\in G$ and $i\in p_1$.  Letting $q\in G$ be arbitrary it suffices to show that $i\not\in q_0$. Find $r\in G$ such that $r\leq p,q$.  Then $r_0\cap r_1 = \0$ implies that $r_0\cap p_1 = \0$, so $i\not \in r_0$.  Now, because  $r_0\supseteq q_0$, we see that also $i\not\in q_0$.

As a final preliminary, we would like to see that $\ang{A\cup \set{ x}}$ is atomless (since $A$ is). Suppose for a contradiction that $x\cdot a$ is an atom for some $a\in A$.  If $a\not\in u$ then $x\cdot a  = p_0\cdot a $ for $(p_0, p_1)\in D_{a}\cap G$ (as proved and used above). As $p_0\cdot a\in A$ this contradicts the fact that $A$ is atomless.  So $a\in u$.    Now, consider $p: = (p_0, p_1)\in D_{-a}\cap G$. We have that $p_0\sm (-a) = p_0\cdot a$ is not $\0$.  Also $p_0\cdot a \not\in u$.  So there is a $q\in D_{a\cdot p_0}\cap G$.  Then as above we have $q_0\cdot (a\cdot p_0) = x\cdot (a\cdot p_0)$.   Note that $x\cdot p_0 = p_0$, so the set on the right hand side is equal to $p_0\cdot a$, hence is nonempty, and is in fact equal to the atom $x\cdot a$. But the set on the left hand side is in $A$, a contradiction.  If the $-x\cdot a $ were assumed to be the atom, a symmetric argument yields a contradiction.

Let $\ang{\ell_\alpha: \alpha<\w_1}$ enumerate the limit ordinals below $\w_1$. Partition $\w_1$ into the sets $\set{ M_i : i\in \w_1}$, with each part of size $\w_1$. For each $i\in \w_1$ let $\ang{k^i_\alpha : \alpha<\w_1}$ enumerate $M_i\sm (\ell_i + 1)$.  Now we construct a sequence $\ang{A_\alpha: \alpha<\w_1}$ of countable atomless subalgebras of $\pset{\w}$ as follows. Let $A_0$ be an arbitrary denumerable atomless subalgebra of $\pset{\w}$.   For any limit ordinal $\alpha = \ell_i$ let $A_\alpha = \bigcup_{\beta<\alpha} A_\beta$ and let $\ang{u^i_\beta : \beta<\w_1}$ enumerate all the nonprincipal ultrafilters on $A_\alpha$.  Now suppose $\alpha$ is the successor ordinal $\gamma + 1$. If  $\gamma = k^i_{\beta},$ we proceed as follows.  Note that $\ell_i<k^i_\beta$ and so $u_\beta^i\se A_\gamma$. Let  $\overline{u^i_\beta}$ denote the filter on $A_{\gamma}$ generated by $u^i_\beta$.    If $\overline{u^i_\beta}$ is not an ultrafilter or if $\gamma$ is not in any of the sets $M_i\sm (\ell_i +1)$ let $A_\alpha = A_\gamma$.  If $\overline{u^i_\beta}$ is an ultrafilter then we let  $x_\gamma$ be generic for $P\paren{A_\gamma, \overline{u^i_\beta}}$.  Define $A_\alpha = \ang{A_\gamma \cup\set{ x_\gamma}}$.  Note that $\overline{u_\beta^i}$ does not generate an ultrafilter on $A_\alpha$.

Now define $B= \bigcup_{\alpha<\w_1} A_\alpha$.  $B$ is atomless as it is a union of atomless algebras.   Suppose that some countable $X\se B$ generates an ultrafilter on $B$. Then pick a limit ordinal $\alpha = \ell_i< \w_1$ such that $X\se A_\alpha$.   So $X$ generates an ultrafilter of $A_\alpha$; say it generates $u^i_\beta$. Let $\gamma = k^i_\beta$.  Then by construction, $X$ does not generate an ultrafilter on $A_{\gamma + 1}$, contradiction.  Therefore $\abs{B} = \w_1 = \uu(B)$.  

Next we claim  that  $\smm(B) =\w$.  We shall show that if $Y\se A_0$ is any maximal ideal-independent set, then $Y$ is still maximal ideal-independent in $B$. Since any member of $B$ is a member of $A_\alpha$ for some $\alpha< \w_1$, it suffices to show that $Y$ is still maximal independent in each $A_\alpha$.  Suppose to the contrary, and let $\alpha$ be minimal such that there is some $x\in A_\alpha\sm Y$ such that $Y\cup \set{ x}$ is still ideal-independent.  If $\alpha$ is a limit ordinal then $x\in A_\beta$ for some $\beta<\alpha$.  By minimality of $\alpha$, we see that $Y\cup \set{ x}$ is not ideal-independent, a contradiction.  If $\alpha = \beta + 1$, then the above lemma implies that, since $Y$ is maximal ideal independent in $A_\beta$, it remains so in $A_{\alpha}$, a contradiction.  Thus $\smm(B) = \w$ as claimed.  \end{proof}

\begin{lem} Suppose that $A$ is a subalgebra of $\pset{\kappa}$ and suppose that $u$ is an ultrafilter of $A$ (nonprincipal as always).  Suppose that $C= \ang{c_i: i<\xi } \se A$ is a maximal free sequence of $A$ such that $c_{i}\se c_j$ for each $i>j\in \xi$.  There is a family, $\E,$  of dense subsets of $P(A, u)$ such $C$ remains maximal in $\ang{A\cup\set{g}}$ whenever  $g = \bigcup_{(p_0,p_1)\in G} p_0$ for a filter $G\se P(A, u)$ intersecting each member of $\E$. Moreover $\abs{\E}\leq \abs{A}$.   \end{lem}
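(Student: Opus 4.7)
My plan is to mirror the proof of Lemma~2.4, with ``free-sequence maximality'' replacing ``ideal-independence maximality'' at the appropriate junctures. For each pair $(e,f)\in A\times A$ I will define a dense set $D_{e,f}\se P(A,u)$; the collection $\E := \set{D_{e,f} : (e,f)\in A\times A}$ then has size at most $\abs{A}$. Write $p^* = (e\cap p_0)\cup (f\cap p_1)$ and $a_p = \kappa\sm(p_0\cup p_1)$, and adopt the conventions $c_{-1} := \kappa$, $c_\xi := \0$. I declare $p\in D_{e,f}$ iff one of
\begin{enumerate}
\item $p_0\cup p_1 \supseteq e\sdiff f$;
\item there is some $i<\xi$ with $c_i\se (p_0\sm e)\cup (p_1\sm f)$;
\item there are $-1\le i<j\le \xi$, not both at the edges, with $c_i\sm c_j\se p^*$.
\end{enumerate}
The intuition: (1) forces the generic $b := (e\cap g)\cup (f\sm g)$ into $A$ via \lemref{ina}, whence maximality of $C$ inside $A$ concludes; and because $C$ is decreasing, the only obstructions to appending a $b$ to $C$ as a free sequence are $c_i\cap b = \0$ for some $i$, or $c_i\sm c_j\se b$ for some $i<j$ (with the edge cases). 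Conditions (2) and (3) are tailored to force these respectively.

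Density is the main technical step and is modeled closely on the density argument of Lemma~2.4. As there, exactly one of $e_p\cap f_p$, $\kappa\sm(e_p\cup f_p)$, $e_p\sm f_p$, $f_p\sm e_p$ lies in $u$. In the first two subcases $e\sdiff f\not\in u$, so $(*)$ yields an extension in (1). In the third (symmetrically the fourth), $e\sm f\in u$, and $(*)$ yields $q\le p$ with $a_q\se e\sm f$. Now apply the maximality of $C$ as a free sequence to $q^*\in A$: either $c_i\sm c_j\se q^*$, in which case $q\in D_{e,f}$ via (3), or else $c_i\cap q^* = \0$ for some $i<\xi$. In the latter case, apply the maximality of $C$ again, this time to $a_q\in A$. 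If $c_{i'}\cap a_q = \0$ for some $i'$, then $k := \max(i,i')$ satisfies $c_k\se q_0\cup q_1$ and $c_k\cap q^* = \0$ (using $c_k\se c_i\cap c_{i'}$ by decreasing monotonicity), so $c_k\se (q_0\sm e)\cup (q_1\sm f)$ and $q\in D_{e,f}$ via (2). If instead $c_{i'}\sm c_{j'}\se a_q$, the plan is to absorb $c_{i'}\sm c_{j'}$ (which lies in $e\sm f$) into $q_0$, producing an extension $q'$ with $c_{i'}\sm c_{j'}\se (q')^*$ and hence $q'\in D_{e,f}$ via (3).

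The principal obstacle is this final absorption: adjoining $c_{i'}\sm c_{j'}$ to $q_0$ could push $q_0$ into $u$ when $c_{i'}\sm c_{j'}\in u$. This needs a further ``Case~1 / Case~2'' sub-split exactly parallel to the one at the end of the proof of Lemma~2.4, depending on whether $a_q\cap c_{j'}\in u$: if not, first enlarge $q_1$ by $a_q\cap c_{j'}$ (safely outside $u$), shrinking $a_q$ to within $c_{j'}$, and then absorb the remainder into $q_0$; if it is, swap the roles of $q_0$ and $q_1$ in the symmetric fashion, or re-iterate the maximality application on a smaller element. Once density is established, the concluding step is routine and follows the template of Lemma~2.4: for $p\in G\cap D_{e,f}$ one has $p_0\se g$ and $p_1\se -g$, so $b\cap (p_0\cup p_1) = p^*$; case (1) gives $b\in A$ via \lemref{ina} and the hypothesis that $C$ is maximal in $A$; case (2) gives $c_i\cap b = \0$, since $c_i\se p_0\cup p_1$ kills the $a_p$-part and $c_i\cap p^* = \0$; and case (3) gives $c_i\sm c_j\se p^*\se b$. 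Each is a specific witness that $C\cup\set{b}$ is not a free sequence, so $C$ remains maximal in $\ang{A\cup\set{g}}$.
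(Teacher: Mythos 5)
Your architecture coincides with the paper's: your $D_{e,f}$ is essentially the paper's $E_{e,f}$ --- your condition (2) is literally the paper's condition ``$p^*\cup a_p\se\kappa\sm c_i$'', since $(p_0\sm e)\cup(p_1\sm f)=(p_0\cup p_1)\sm p^*=\kappa\sm(p^*\cup a_p)$, and your edge-convention condition (3) subsumes the paper's remaining conditions $c_i\sm c_j\se p^*$ and $\kappa\sm c_0\se p^*$ --- the dichotomy you extract from maximality of $C$ is the paper's fact $(*)$, and your concluding case analysis is the paper's. The density argument also matches up to the last case, and exactly there your proposal has a genuine gap. In the case $c_{i'}\sm c_{j'}\se a_q$ with $c_{i'}\sm c_{j'}\in u$ (the obstacle you flag yourself), the target you aim for is unattainable: condition (3) with this pair would give $c_{i'}\sm c_{j'}\se p^*\se p_0\cup p_1$, and $p_0\cup p_1\notin u$, so no extension whatsoever can realize it. Moreover your sub-split is degenerate there: $c_{i'}\sm c_{j'}\in u$ forces $c_{j'}\notin u$, hence $a_q\cap c_{j'}\notin u$, so you are always in your first sub-case; and the move you describe in that sub-case fails, because adding $a_q\cap c_{j'}$ to $q_1$ shrinks $a_q$ to $a_q\sm c_{j'}$ (which lies in $\kappa\sm c_{j'}$, not in $c_{j'}$ as you wrote), and this remainder is in $u$ (since $a_q\in u$ while $a_q\cap c_{j'}\notin u$), so it cannot be absorbed into $q_0$ --- nor can $c_{i'}\sm c_{j'}$ itself. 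The fallback ``swap $q_0$ and $q_1$'' gains nothing either: everything in play lies in $a_q\se e\sm f$, hence misses $f$ and contributes nothing to $p^*$ when put into the $q_1$-coordinate.

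The repair is the paper's Case 1 move, and it lands in your condition (2) rather than (3): when $c_{i'}\sm c_{j'}\in u$, set $r_0=q_0$ and $r_1=q_1\cup\bigl(a_q\sm(c_{i'}\sm c_{j'})\bigr)$, legal because $a_q\sm(c_{i'}\sm c_{j'})\se\kappa\sm(c_{i'}\sm c_{j'})\notin u$. Then $a_r=a_q\cap(c_{i'}\sm c_{j'})\se\kappa\sm c_{j'}$, and $r^*=q^*\se\kappa\sm c_i$ because $r_1\sm q_1\se a_q\se e\sm f$ misses $f$; hence $c_{\max\set{i,j'}}\se(r_0\cup r_1)\sm r^*=(r_0\sm e)\cup(r_1\sm f)$, i.e.\ your condition (2) with index $\max\set{i,j'}$. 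Your split on $a_q\cap c_{j'}\in u$ can be salvaged only with the moves reassigned: if $a_q\cap c_{j'}\in u$ then $c_{i'}\sm c_{j'}\notin u$ and the straightforward absorption into $q_0$ gives (3); if $a_q\cap c_{j'}\notin u$ then adding $a_q\cap c_{j'}$ to $q_1$ already gives (2) as above, with no further absorption needed --- or possible. Finally, note that your edge pair $j'=\xi$ (i.e.\ $c_{i'}\se a_q$ with $c_{i'}\in u$) is covered by neither move; it is better to follow the paper and keep the second index a genuine $c_{j'}$ with $j'<\xi$, as one can in the intended application where $\xi=\w$.
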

\begin{proof}  We will use the following fact several times: \begin{equation*}\tag{$*$}
\forall a\in A \, \Big(\exists i\in \xi\, \big[a\se (\kappa\sm c_i)\big] \text{ or }\exists i<j\in \xi \, \big[(c_i\sm c_j)\se a \big] \text{ or } \kappa\sm c_0\se a \Big)
\end{equation*}
To see this, suppose that $a\in A$.  Clearly the desired conclusion holds if $a = \0$ or $a= \kappa$; so suppose that $a\neq \0, \kappa$.  By maximality of $C$ we have that either

\be[A.] \item $\exists F\in \fsub\xi$ such that $\paren{\bigcap_{i\in F} c_i} \cap a  = \0$, or 
\item $ \exists F, G\in \fsub\xi$, with $\forall i\in F\forall j\in G[i<j]$ such that $\paren{\bigcap_{i\in F} c_i }\cap \paren{\bigcap_{j\in G} \kappa\sm c_j}\cap (\kappa\sm a) = \0 $
\ee

If $A$ holds then $F\neq  \0$ since $a\neq\0$ and then $c_{\max F}\cap a = \0$ so that $a\se \paren{\kappa\sm c_{\max{F}}}$, hence the first part of ($*)$ holds. 

If $B$ holds then $F\neq \0 $ or $G\neq \0$ since $a\neq \kappa$. If $F\neq \0\neq G$ then $(c_{\max F}\sm c_{\min G})\se a$, giving the second condition of $(*)$. If $F\neq \0 = G$ then $c_{\max{F}}\se a$, giving the second condition of $(*)$ again. Finally if $F = \0 \neq G$ then $(\kappa\sm c_{\min G}\se a$, giving the second or third condition of $(*)$. 

%

 Let $e, f\in A$. For any $p\in P(A, u)$ we define $p^* = (p_0\cap e)\cup (p_1\cap f)$, and $a_p= \w\sm (p_0\cup p_1)$.  We define a subset $E_{e,f}$ of $P(A, u)$ as follows. 

$p\in E_{e,f}$ iff one of the following conditions holds: \be
\item $p_0\cup p_1\supseteq e\sdiff f$
\item $\exists i<j\in \xi\, [p^*\supseteq c_i\sm c_j]$
\item $\exists i\in \xi \, [p^*\cup a_p \se \kappa \sm c_i]$ 
\item $\kappa\sm c_0\se p^*$. 
\ee
We claim that $E_{e,f}$ is dense.  Let $p\in P(A, u)$.  One of the following holds 
\begin{enumerate}[(i)]  \item $e_p\cap f_p\in u$
\item$ \w \sm(e_p\cup f_p) \in u$
\item $e_p\sm f_p\in u$
\item $f_p\sm e_p\in u$. 
\end{enumerate}

If (i) or (ii) is the case, then $e_p\sdiff f_p\not\in u$, so also $e\sdiff f\not \in u$ (as $p_0\cup p_1\not\in u$). Thus we can extend $p$ to a condition $q$ such that $q_0\cup q_1 \supseteq e\sdiff f$, so that (1) of the definition of $E_{e,f}$ is satisfied.

Next, suppose that (iii) is the case.  Then also $e\sm f\in u$, so we can first extend $p$ to some condition $q$ so that $a_q\se e\sm f$.  Now $q^*\in A$, so, by $(*)$, either \be[(i)]\setcounter{enumi}{4}
\item $\exists i<\xi \, [q^*\se \kappa\sm c_i ],$ or 
\item $\exists i<j\in\xi\, [q^*\supseteq c_i\sm c_j]$, or 
\item $\kappa\sm c_0\se q^*$.
\ee
If (vi) holds then $q$ is in $E_{e,f}$ by virtue of condition (2).  If (vii), then $q$ is in $E_{e,f}$ by virtue of (4).   So we assume now that (v) is the case, and fix $i\in \xi$ as guaranteed by (v). Now also $a_q\in A$, so either 
\be[(i)]\setcounter{enumi}{7}
\item $\exists j<\xi \, [a_q\se \kappa\sm c_j ],$ or 
\item $\exists j<k\in\xi\, [a_q\supseteq c_j\sm c_k]$, or 
\item $\kappa\sm c_0\se a_q$. 
\ee  First suppose that (viii) holds. Then $a_q\cup q^*\se (\kappa\sm c_i)\cup (\kappa \sm c_j) = \kappa \sm (c_i \cap c_j) = \kappa \sm c_{\max\set{ i,j}}$,  so $q\in E_{e,f}$ by virtue of condition (3).  Next assume that (ix) holds and fix $j<k\in \xi$ as in that case.   We consider two cases. 
\bi \item Case 1. $(c_j\sm c_k)\in u$.  Then extend $q$ to a condition $r$ such that $r_0= q_0$, and $r_1 = q_1\cup (-q_0\cap -(c_j\sm c_k)). $ Then $-(c_j\sm c_k) \se r_0\cup r_1$, so $a_r\se c_j\sm c_k$. Note that $r_1\sm q_1 \se a_q\se e\sm f$, so $(r_1\sm q_1)\cap f = 0$. Then $r^* = (r_0\cap e) \cup (r_1\cap f)  = (q_0\cap e) \cup (r_1\cap f)$, and $(r_1\sm q_1) \cap f = \0$, so in fact $r^* = q^*$.  Recall that $q^*\se (\kappa \sm c_i)$ so $r^*\cup a_r\se (\kappa \sm c_{\max\set{i,k}})$.  Thus condition (3) holds for $r$.

\item Case 2. $(c_j\sm c_k)\not\in u$. Then we extend $q$ to a condition $r$ so that $r_0 = q_0 \cup (c_j \sm c_k)$ and $r_1 = q_1$.  Recall that $(c_j\sm c_k)\se a_q\se e$, so $r^*\supseteq (r_0\cap e)\supseteq (c_j\sm c_k)\cap e = c_j\sm c_k$.  Thus $r$ satisfies condition (2) in the definition of $E_{e,f}$.

\ei

Finally suppose that (x) is the case.  Again we consider two cases. 

\bi
\item Case 1.  $a_q \cap c_0 \not\in u$.  Then we extend $q$ to a condition $r$ where $r_0 = q_0$ and $r_1  = q_1\cup (a_q\cap c_0)$.  Then $a_r\se (\kappa \sm c_0)$.  Also $r^* = q^*$ by the same argument as in Case 1 above.  So $a_r\cup r^*\se (\kappa\sm c_i)$, and $r$ satisfies condition (3) of the definition of $E_{e,f}$.

\item Case 2. $a_q \cap c_0 \in u$.  Then we extend $q$ to a condition $r$ by setting $r_0 = q_0\cup (a_q\sm c_0)$ and $r_1 = q_1$.   Then $r^* \supseteq r_0\cap e \supseteq \kappa\sm c_0 $, so condition (4) in the definition of $E_{e,f}$ holds.

\ei

Thus the sets $E_{e,f}$ are dense.  Let $\E = \set{ E_{e,f}: e,f\in A}$. Clearly $\abs{\E}\leq \abs{A}$.  Suppose that $G$ is a filter that intersects each member of $\E$, and let $g = \bigcup_{(p_0,p_1)\in G} p_0$.  We must show that $C$ is still maximal in $\ang{A\cup \set{ g}}$.  Letting $b\in \ang{A\cup \set{g}}$ we can write $b = (g\cap e) \cup( f\sm g)$ for some $e,f\in A$.  Let $p\in G\cap E_{e,f}$;  we will show that $C^{\frown}\set{ b} $ is no longer free, considering cases according to the definition of $E_{e,f}$. \bi
\item Case 1. $p_0\cup p_1 \supseteq e\sdiff f .$ By \lemref{ina}, in this case $b\in A$. So $b$ does not extend $C$  by maximality in $A$.

\item Case 2.  $\exists i<j\in \xi\, [p^*\supseteq c_i\sm c_j]$. We have that $p^*\se b$, so also $c_i\sm c_j\se b$. Then $(c_i)\cap (\kappa \sm c_j) \cap (\kappa \sm b) = \0$, so $b$ does not extend $C$.

\item Case 3. $\exists i\in \xi \, [p^*\cup a_p \se \kappa \sm c_i]$.   
Clearly $b\cap (p_0\cup p_1) = p^*,$ so $b\se p^*\cup a_p$.  So $b\se \kappa \sm c_i$.  Thus $c_i\cap b = \0$, and again $b$ does not extend $C$.

\item Case 4.  $\kappa\sm c_0\se p^*$. Since $p^*\se b$, also  $\kappa \sm c_0 \se b$ so $(\kappa\sm c_0)\cap (\kappa\sm b) = \0$.\qedhere
\ei 
\end{proof}

\begin{prop}[CH]Assuming CH there is an atomless Boolean algebra $A$ such that $\ff(A) = \w$ and $\uu(A) = \w_1$. 
\end{prop}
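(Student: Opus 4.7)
The plan is to mimic the proof of the previous proposition, replacing the preservation step for maximal ideal-independent sets with the new lemma on preservation of maximal decreasing free sequences.

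First, I would choose a countable atomless subalgebra $A_0\se \pset{\w}$ containing a maximal (with respect to end-extension) decreasing free sequence $C = \ang{c_n: n<\w}$, with $c_n \supsetneq c_{n+1}$ for each $n$. A suitable $A_0$ can be constructed explicitly, for example as the algebra generated by such a strictly decreasing sequence together with just enough extra sets to make it atomless but not so many that some end-extension of $C$ remains free.

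Next, I would build a tower $\ang{A_\alpha : \alpha<\w_1}$ of countable atomless subalgebras of $\pset{\w}$ exactly as in the previous proposition, taking unions at limit stages and enumerating the non-principal ultrafilters of each limit $A_{\ell_i}$ via a bookkeeping of pairs $(i,\beta)$. At a successor stage $\alpha = \gamma + 1$ corresponding to the pair $(i,\beta)$ for which $\overline{u^i_\beta}$ is still an ultrafilter on $A_\gamma$, I would set $A_\alpha = \ang{A_\gamma\cup \set{x_\gamma}}$, where $x_\gamma = \bigcup_{(p_0, p_1) \in G} p_0$ for a filter $G\se P(A_\gamma, \overline{u^i_\beta})$ meeting every member of the countable family consisting of the sets $D_a$ (for $a \notin \overline{u^i_\beta}$) and $E_n$ (for $n \in \w$) from the previous proposition, together with the sets $E_{e,f}$ (for $e,f \in A_\gamma$) from the lemma just proved.

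Setting $B = \bigcup_{\alpha<\w_1} A_\alpha$, the arguments from the previous proposition transfer verbatim to show that $B$ is atomless with $|B| = \w_1$ and $\uu(B) = \w_1$. The new ingredient is that, thanks to the inclusion of the $E_{e,f}$ at each step, the new lemma applies at every successor stage, so by transfinite induction $C$ remains maximal free in every $A_\alpha$, hence in $B$. Since $C$ has length $\w$ and $B$ is infinite and atomless, $\ff(B) = \w$. The main issue to work out carefully is the initial construction of $A_0$ together with its maximal decreasing free sequence of length $\w$; once that is in hand the rest of the proof is a mechanical adaptation of the preceding proposition.
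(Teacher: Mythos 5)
Your proposal is correct and follows essentially the same route as the paper: keep the $\w_1$-stage construction and the dense sets $D_a$ and $E_i$ from the previous proposition, but replace the $D_{e,f}$ by the $E_{e,f}$ of the preservation lemma for maximal decreasing free sequences, and run the same minimal-counterexample induction to keep $C$ maximal in every $A_\alpha$. The only point you flag as delicate, the existence of $A_0$, is in fact easy (and is likewise taken for granted in the paper): in the countable atomless algebra of clopen subsets of $2^\w$, any strictly decreasing sequence $\ang{c_n : n\in\w}$ of clopen sets with $c_0\neq 1$ whose intersection is a single point is a maximal free sequence, since by compactness every clopen $b$ either contains some $c_n$ (if it contains the point) or is disjoint from some $c_n$ (if not), so no $b$ can end-extend $C$.
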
\begin{proof} As above, we construct a sequence of countable, atomless subalgebra of $\pset{\w}$, $\ang{A_\alpha: \alpha<\w_1}$, such that if $B: = \bigcup_{\alpha<\w_1} A_\alpha$, $\uu(B) = \w_1$.  The differences are as follows.  \be \item In $A_0$ there is a countable maximal free sequence $C = \ang{c_i: i\in\w}$ such that $c_{j}\se c_i$ for each $i<j\in \w$.  
\item The sets $E_{e,f}$ of the previous lemma take the place of the $D_{e,f}$ in the definition of $\D$. 
\ee
We claim that $\ff(B) = \w$.  Because $B$ is atomless, it suffices to show that $C$ is still maximal in $B$. Since each member of $B$ is also a member of some $A_\alpha$ for $\alpha<\w_1$, it suffices to show that $C$ is maximal in each $A_\alpha$.  Suppose not and let $\alpha<\w_1$ be minimal such that for some $x\in A_\alpha$,  $C^\frown\set{ x}$ is still a free sequence.   Clearly $\alpha$ is a successor ordinal, say $\alpha = \beta + 1$. By minimality of $\alpha$, $C$ is maximal in $A_\beta$.  The above lemma guarantees that $C$ is still maximal in $A_\alpha$, contradiction. \end{proof}

\begin{cor}[CH] Assuming CH there is an atomless Boolean algebra $A$ such that $\smm(A) = \ff(A) = \w<\w_1 = \uu(A)$. 
\end{cor}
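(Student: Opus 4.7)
The plan is to merge the constructions of Propositions 1.3 and 1.5 into a single $\w_1$-length iterated extension, producing one atomless BA $B$ that realizes $\smm(B) = \ff(B) = \w$ while keeping $\uu(B) = \w_1$. The central observation is that each preservation lemma only asks the generic filter $G \se P(A_\gamma, u)$ at a successor step to meet a countable collection of dense sets, so at each step we can simply meet the \emph{union} of both collections, which remains countable since $A_\gamma$ is countable.

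Concretely, I would first choose a countable atomless $A_0 \se \pset{\w}$ containing both a maximal ideal-independent family $Y$ and a maximal descending free sequence $C = \ang{c_i : i<\w}$ with $c_j\se c_i$ for $i<j$; such an $A_0$ exists by a routine closure argument (take any countable atomless subalgebra, pick $Y$ and $C$ inside it by Zorn, and close under finite Boolean operations). Then I would run the bookkeeping construction from the proof of Proposition 1.3 verbatim, with one modification: at each successor stage $\gamma+1$ where a generic $x_\gamma$ for $P(A_\gamma, \overline{u^i_\beta})$ is adjoined, I would require the witnessing filter $G_\gamma$ to intersect the enlarged family
$$\set{D_a : a\in A_\gamma\sm\overline{u^i_\beta}}\cup\set{E_i : i<\w}\cup\set{D_{e,f} : e,f\in A_\gamma}\cup\set{E_{e,f} : e,f\in A_\gamma},$$
with $D_{e,f}$ from Lemma 1.3 and $E_{e,f}$ from Lemma 1.4. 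This family is countable, so Rasiowa–Sikorski supplies $G_\gamma$. Exactly as in Proposition 1.3, the conditions imposed by the $D_a$ and $E_i$ force $x_\gamma \notin A_\gamma$, destroy $\overline{u^i_\beta}$, and keep $A_{\gamma+1}$ countable and atomless, so the standard bookkeeping yields $\uu(B) = \w_1$ for $B := \bigcup_{\alpha<\w_1} A_\alpha$. Meeting every $D_{e,f}$ lets Lemma 1.3 conclude that $Y$ stays maximal ideal-independent in $A_{\gamma+1}$; meeting every $E_{e,f}$ lets Lemma 1.4 conclude that $C$ stays maximal in $A_{\gamma+1}$. Induction up through $\w_1$ (handling limits as in Proposition 1.3) gives $\smm(B) = \ff(B) = \w$.

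The only real point to check is compatibility: one might worry that piling extra density requirements onto $G_\gamma$ could disturb the arguments of Proposition 1.3 that $x_\gamma \notin A_\gamma$, that $\overline{u^i_\beta}$ is killed, and that atomlessness is preserved. However, those arguments use only the specific dense sets $D_a$ and $E_i$, which remain in the combined family; further restricting $G_\gamma$ by also intersecting the $D_{e,f}$ and $E_{e,f}$ can only make the filter ``more generic'' and cannot break the earlier conclusions. Hence no new combinatorial input is needed beyond the two existing preservation lemmas and the construction of Proposition 1.3, and the corollary follows.
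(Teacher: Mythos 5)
Your proposal is correct and is essentially the paper's (implicit) argument for the corollary: run the construction of Proposition 1.3, but at each successor stage require the filter to meet both the $D_{e,f}$'s of Lemma 1.3 and the $E_{e,f}$'s of Lemma 1.4 together with the $D_a$'s and $E_i$'s --- still a countable family --- so both preservation lemmas apply simultaneously while the ultrafilter-destroying bookkeeping is unaffected, giving $\smm(B)=\ff(B)=\w$ and $\uu(B)=\w_1$. The only soft spot is the parenthetical claim that Zorn plus closing under Boolean operations yields the decreasing maximal free sequence $C$ in $A_0$ (Zorn gives maximality but not decreasingness, and closing an algebra after choosing $C$ could in principle disturb maximality); the paper asserts the existence of such an $A_0$ equally without proof, and it can be exhibited directly, e.g.\ a countable atomless algebra of sets isomorphic to the clopen algebra of the Cantor set with $c_i$ a strictly decreasing sequence of basic clopen neighborhoods of a fixed point, for which condition $(*)$ of Lemma 1.4 is immediate.
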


\newpage

\bibliographystyle{amsalpha} 
\bibliography{/Users/eduardo/Documents/TeX/atomless.bib}

\end{document}